\newtheorem{thm}{Theorem}
\newtheorem{sol*}{Solution}
\theoremstyle{definition}
\newcommand{\pa}[1]{{\left (#1\right )} }
\newcommand{\mbar}[1]{\overline{\mathcal{M}}_{0,#1}}
\newcommand{\rk}{{\mathrm{rk}\,}}
\title{Improved Bounds on Ultra-Log Concavity of the Grothendieck Class of $\mbar n$}
\author{Eduardo Nascimento}
\date{June 2025}
\begin{document}
\maketitle
\begin{abstract}
    The class of the fine moduli space of stable $n$-pointed curves of genus zero,
    $\mbar{n}$, in the Grothendieck ring of varieties encodes its Poincaré polynomial.
    Aluffi-Chen-Marcolli conjecture that the Grothendieck class of $\mbar{n}$
    is real-rooted (and hence ultra-log-concave),
    and they proved an asymptotic ultra-log-concavity result for these polynomials. 
    
    We build upon their work, by providing effectively computable bounds for the error term
    in their asymptotic formula for $\rk H^{2l}(\mbar n)$.
    As a consequence, we prove that in the range $l \le \frac{n}{10\log n}$, the ultra-log-concavity
    inequality
    \[\left(\frac{\rk H^{2(l-1)}({\mbar{n}})}{\binom{n-3}{l-1}}\right)^2 \ge \frac{\rk H^{2(l-2)}({\mbar{n}})\rk H^{2l}({\mbar{n}})}{\binom{n-3}{l-2}\binom{n-3}{l}} \]
    holds for $n$ sufficiently large.
      
\end{abstract}

\section{Introduction}

Let $\mathbb L$ be the Lefschetz-Tate class (that of the affine line) in the Grothendieck ring of varieties.
Then, the class $[\mbar n]$ of the moduli space of stable $n$-pointed curves of genus zero
is a polynomial in $\mathbb L$ whose coefficients are the even betti numbers of $\mbar n$. 

Since $\mbar n$ is a smooth projective variety, by the Hard Lefschetz theorem its even 
betti numbers form a unimodal sequence.
In \cite{ACM}, Aluffi-Chen-Marcolli conjecture the much stronger property that its Grothendieck class
in the Grothendieck ring of varieties is real-rooted and thus, by a theorem of Newton, is ultra-log-concave.

From the work of Keel \cite{Kee92}, Getzler \cite{Get95}, and Manin \cite{Man95}, we have recursive
    formulas for the Grothendieck class of $\mbar n$ and differential and functional equations for
    a generating function of these polynomials.
Using these tools, Aluffi-Chen-Marcolli showed the following:

\begin{thm}[\cite{ACM}, Theorem 1.1]\label{thm:ACMulc}
For every $l$ there exists $N=N(l)$ such that for all $n>N$,
    \[\frac{\rk H^{2(l-2)}({\mbar{n}})}{\binom{n-3}{l-2}}\frac{\rk  H^{2l}({\mbar{n}})}{\binom{n-3}{l}} \le\pa{\frac{\rk H^{2(l-1)}({\mbar{n}})}{\binom{n-3}{l-1}}}^2.\]
\end{thm}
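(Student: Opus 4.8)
The plan is to reduce this asymptotic inequality to a statement about the leading-order growth of the Betti numbers $b_{n,l}:=\rk H^{2l}(\mbar n)$ for \emph{fixed} $l$ as $n\to\infty$. First I would feed the recursive formulas of Keel and the differential/functional equation of Getzler--Manin for the generating series $\sum_n P_n(q)\,x^n/n!$, where $P_n(q)=\sum_l b_{n,l}q^l$, into a coefficient extraction in $q$, so as to isolate the growth of $b_{n,l}$ in $n$ for each fixed $l$. The expectation---consistent with the exact formula $b_{n,1}=2^{n-1}-\binom n2-1$---is an asymptotic expansion
\[
b_{n,l}=c_l\,n^{\,l-1}\,2^{\,n}\bigl(1+o(1)\bigr),
\]
in which the exponential rate $2$ is \emph{common} to every $l\ge 1$ (in agreement with the $l=1$ formula), while only the constant $c_l>0$ and the polynomial degree $l-1$ depend on $l$. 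Producing this expansion amounts to reading off the dominant singular (or saddle-point) data of the generating function after taking the coefficient of $q^l$.

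Granting such an expansion, I would substitute it into the normalised quantities $a_{n,l}:=b_{n,l}/\binom{n-3}{l}$. Since $\binom{n-3}{l}\sim n^l/l!$, the expansion collapses to
\[
a_{n,l}\sim C_l\,\frac{2^{\,n}}{n},\qquad C_l:=c_l\,l!,
\]
so that the full $n$-dependence $2^n/n$ is \emph{identical} for all $l\ge 1$. The exponential and polynomial factors therefore cancel between the two sides of the target inequality, and for $l\ge 3$ (so that all three indices $l-2,l-1,l$ lie in the common-rate regime) the claim $a_{n,l-1}^2\ge a_{n,l-2}a_{n,l}$ reduces, in the limit, to the single numerical inequality $C_{l-1}^2\ge C_{l-2}C_l$, i.e. to log-concavity of the limiting constants $(C_l)$. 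The boundary case $l=2$ is easier and can be handled separately: there $a_{n,0}=1$ carries no exponential factor, so $a_{n,1}^2$ beats $a_{n,0}a_{n,2}$ by a full power of $2^{n}$ and the inequality holds with room for large $n$.

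Two points then require genuine work. The combinatorial step is to identify the constants $c_l$ explicitly from the functional equation and to check that $C_l=c_l\,l!$ is (strictly) log-concave; granting strictness $C_{l-1}^2>C_{l-2}C_l$, the cancelled $o(1)$ error terms are absorbed by taking $n$ beyond a threshold $N(l)$, which is exactly the qualitative shape of the statement. The real obstacle, however, is the analytic step of producing the asymptotic with the correct exponential rate. The stratification underlying the functional equation expresses $b_{n,l}$ through alternating sums---each open stratum $M_{0,m}$ contributes a factor $\prod_{k=2}^{m-2}(q-k)$, whose constant terms are $\pm(m-2)!$---so a naive bound would suggest spurious factorial growth; the true, far smaller rate $2^{\,n}$ emerges only after massive cancellation. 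Capturing this cancellation by a careful singularity analysis of the generating function, rather than by term-by-term estimation, is the crux, and it is precisely the place where the present paper will later have to quantify the error $o(1)$ effectively.
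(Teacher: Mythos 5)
Your overall strategy---derive a fixed-$l$, large-$n$ asymptotic for $b_{n,l}:=\rk H^{2l}(\mbar n)$ from the Keel/Getzler/Manin generating function and then compare leading terms---is indeed the route taken in \cite{ACM}. But the asymptotic you posit is wrong, and the error is fatal to the rest of the argument. You extrapolate from $b_{n,1}=2^{n-1}-\binom n2-1$ to the ansatz $b_{n,l}=c_l\,n^{l-1}2^n(1+o(1))$ with the exponential rate $2$ common to all $l$. The correct asymptotic is exactly Theorem \ref{thm:ACMasymptotic} quoted above: $b_{n,l}\sim (l+1)^{l+n-1}/(l+1)!$, so the exponential rate is $(l+1)^n$ and \emph{depends on} $l$ (already $b_{n,2}$ grows like $3^{n+1}/6$, not like a constant times $n\cdot 2^n$). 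Consequently the step where ``the exponential and polynomial factors cancel between the two sides'' does not occur, the problem does not reduce to log-concavity of limiting constants $C_l$, and the combinatorial task you set yourself (computing the $c_l$ and checking $C_{l-1}^2\ge C_{l-2}C_l$) is not the right one.

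With the correct asymptotic the conclusion in fact needs no numerical input at all: since $\binom{n-3}{l-2}\binom{n-3}{l}/\binom{n-3}{l-1}^2\to 1$ for fixed $l$, one gets
\[
\frac{\bigl(b_{n,l-1}/\binom{n-3}{l-1}\bigr)^2}{\bigl(b_{n,l-2}/\binom{n-3}{l-2}\bigr)\bigl(b_{n,l}/\binom{n-3}{l}\bigr)}\;\sim\;\frac{(l-1)!\,(l+1)!}{(l!)^2}\cdot\frac{l^{2l+2n-4}}{(l-1)^{l+n-3}(l+1)^{l+n-1}},
\]
whose dominant factor is $\left(\frac{l^2}{l^2-1}\right)^{n}\to\infty$; the ratio is eventually not merely $\ge 1$ but unbounded, because the sequence of exponential rates $(l+1)^n$ is itself strictly log-concave in $l$. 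This is precisely the computation carried out quantitatively in the proof of Theorem \ref{thm:improved lc}. Your closing observation about the massive cancellation hidden in the stratification sum is a genuine difficulty for \emph{establishing} the asymptotic, but the asymptotic one must establish has base $l+1$, not base $2$.
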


The key to their proof is the following asymptotic formula:
\begin{thm}[\cite{ACM}, Theorem 1.3]\label{thm:ACMasymptotic}
    For a fixed $l\ge 0$,
    \[\rk H^{2l}(\mbar n)\sim \frac{(l+1)^{l+n-1}}{(l+1)!}\]
    as $n\to \infty$.
\end{thm}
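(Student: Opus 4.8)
The plan is to fix $l$ and extract the behaviour of $\rk H^{2l}(\mbar n)$ in $n$ from the single coefficient of $q^l$ in the Getzler--Manin generating function, writing $q=\mathbb L$. Introduce the exponential generating function
\[ \Phi(x,q)=\sum_{n\ge 3}[\mbar n]\,\frac{x^{n-1}}{(n-1)!}, \]
which by the work of Keel, Getzler and Manin satisfies an explicit functional (equivalently, differential) equation, and expand it in powers of $q$ as $\Phi(x,q)=\sum_{l\ge 0}\Phi_l(x)\,q^l$. Then
\[ \Phi_l(x)=\sum_{n\ge 3}\rk H^{2l}(\mbar n)\,\frac{x^{n-1}}{(n-1)!}, \]
so that $\rk H^{2l}(\mbar n)=(n-1)!\,[x^{n-1}]\Phi_l(x)$, and the whole problem reduces to the coefficient asymptotics of the single entire function $\Phi_l$.

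The heart of the argument is the following claim, which I would prove by induction on $l$: the function $\Phi_l$ is a finite combination
\[ \Phi_l(x)=c_{l}\,e^{(l+1)x}+\sum_{j=0}^{l-1}p_{l,j}(x)\,e^{(j+1)x}+r_l(x), \]
where the $p_{l,j}$ and $r_l$ are polynomials and the leading coefficient $c_l$ is a constant. The base case is $\Phi_0(x)=e^x-1-x$, which records $\rk H^0(\mbar n)=1$. For the inductive step, substituting the $q$-expansion into the functional equation and collecting the coefficient of $q^l$ produces an inhomogeneous linear ordinary differential equation for $\Phi_l$ whose forcing term is assembled from the lower functions $\Phi_0,\dots,\Phi_{l-1}$; by the inductive hypothesis this forcing term is again a combination of exponentials $e^{(j+1)x}$, and solving the equation reproduces the lower frequencies while introducing the single new frequency $l+1$. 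The delicate points are to show that $e^{(l+1)x}$ genuinely appears, that \emph{no higher} frequency is created (despite products of lower solutions naively suggesting frequency $l+2$), and that it occurs with a purely constant coefficient $c_l$ rather than a polynomial one — this last fact is what pins down a clean $(l+1)^{n-1}$ asymptotic rather than an $n(l+1)^{n-1}$ one, and it rests on matching the $q$-grading against the exponential frequency in the functional equation.

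Granting the claim, the asymptotics are immediate: since $[x^{n-1}]e^{(j+1)x}=(j+1)^{n-1}/(n-1)!$,
\[ \rk H^{2l}(\mbar n)=c_l\,(l+1)^{n-1}+\sum_{j=0}^{l-1}O\!\left(n^{\deg p_{l,j}}(j+1)^{n-1}\right), \]
and because $l+1$ is the largest base, every term with $j<l$ (and the polynomial $r_l$) is $o\!\big((l+1)^{n-1}\big)$, so $\rk H^{2l}(\mbar n)\sim c_l\,(l+1)^{n-1}$. It then remains only to evaluate the constant $c_l$. Tracking the top-frequency part through the inductive differential equation gives a recursion for $c_l$ that I expect to collapse, via an Abel-type summation identity, to $c_l=(l+1)^{l}/(l+1)!$; substituting yields
\[ \rk H^{2l}(\mbar n)\sim \frac{(l+1)^{l}}{(l+1)!}\,(l+1)^{n-1}=\frac{(l+1)^{l+n-1}}{(l+1)!}, \]
as desired. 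I expect the genuine obstacles to be exactly the two bookkeeping facts flagged above — bounding the top frequency by $l+1$ and showing its coefficient is constant — together with the evaluation of $c_l$; the emergence of Cayley's count $(l+1)^{l-1}$ of labelled trees on $l+1$ vertices after factoring one power of $l+1$ out of $c_l$ is a reassuring sign that this final recursion is of Abel/tree-counting type.
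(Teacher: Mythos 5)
Your route is genuinely different from the one this paper takes. The statement you are addressing is quoted here from Aluffi--Chen--Marcolli, and the paper's own argument for it is subsumed in the proof of Theorem \ref{thm:refined asymptotic}: starting from the explicit formula \eqref{eq:AMNformula}, the claimed main term $\frac{(l+1)^{l+n-1}}{(l+1)!}$ is literally the $k=l$ summand, and all other summands are controlled by elementary bounds ($|C_{nki}|\le Cn^{i+1}$, a count of compositions, and Robbins' form of Stirling's formula). That approach buys uniformity in $l$ up to $l\le n/(10\log n)$ together with an explicit relative error $\exp(O(1/n^2))$, which is exactly what the ultra-log-concavity application requires. Your generating-function approach is essentially the original ACM strategy: expand the Getzler--Manin generating function in powers of $q$, show each $\Phi_l$ is a finite combination of exponentials $e^{(j+1)x}$ with polynomial coefficients and a constant coefficient on the top frequency $e^{(l+1)x}$, and read off coefficient asymptotics. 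It is more conceptual for fixed $l$, but making it uniform in $l$ would take substantial extra work.

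That said, as written your proposal has a genuine gap: the entire content of the theorem is concentrated in the structural claim about $\Phi_l$ --- that the top frequency is exactly $l+1$ (and not $l+2$ or higher, which products of lower-order solutions naively produce), that its coefficient is a constant in $x$ rather than a polynomial, and that this constant equals $(l+1)^l/(l+1)!$ --- and you assert all three with ``I would prove'' and ``I expect'' rather than carrying them out. These claims are in fact correct, and can be read off a posteriori from \eqref{eq:AMNformula}: the $k$-th summand contributes frequency $k+1$ with a coefficient that is polynomial in $n$, and that polynomial is the constant $(l+1)^{l-1}/l!$ precisely when $k=l$ (the $m=0$ term). But the induction through the functional equation is where all the difficulty lives, and your sketch defers exactly those steps, including the Abel-type evaluation of $c_l$. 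Until that bookkeeping is executed, the argument is a plan rather than a proof; if you want to complete it along these lines, you should either carry out the induction in detail or simply derive the exponential-polynomial structure directly from \eqref{eq:AMNformula}, after which the asymptotics follow as you say.
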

    
    More recently, in \cite{AMN}, Aluffi-Marcolli-N. found explicit formulas for these Grothendieck classes, such as the following:
    \begin{thm}[\cite{AMN}, Theorem 1.1]
        \[[\mbar{n}]= (1-\mathbb L)^{n-1} \sum_{k>0}\sum_{j>0} s(k+n-1,k+n-1-j) S(k+n-1-j,k+1) \mathbb L^{k+j}.\]
    \end{thm}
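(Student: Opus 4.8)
The plan is to prove the formula by extracting coefficients from the Getzler--Manin generating function for the classes $[\mbar n]$ and translating the resulting elementary expression into Stirling numbers. Writing $q=\mathbb L$ and $\Phi(x,q)=\sum_{n\ge 3}[\mbar n]\frac{x^{n-1}}{(n-1)!}$, the recursive formulas of Keel, Getzler and Manin furnish a functional equation for $\Phi$ (equivalently, for its compositional inverse). First I would solve this equation explicitly; the upshot I expect is that $\Phi$, after normalizing by the factor $(1-q)^{n-1}$ visible in the statement, is built entirely from the two elementary operations $t\mapsto\log(1+t)$ and $u\mapsto e^{u}-1$, i.e. from powers of $(1+t)^{\alpha}-1=e^{\alpha\log(1+t)}-1$ with $\alpha$ an explicit function of $q$. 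This is precisely the shape that forces both kinds of Stirling numbers to appear.

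The mechanism for their appearance is the pair of standard exponential generating function identities
\[\frac{(\log(1+t))^{b}}{b!}=\sum_{a\ge b}s(a,b)\frac{t^{a}}{a!},\qquad \frac{(e^{u}-1)^{c}}{c!}=\sum_{b\ge c}S(b,c)\frac{u^{b}}{b!},\]
where $s$ denotes the signed Stirling numbers of the first kind. Substituting $u=\alpha\log(1+t)$ into the second identity and then expanding each $(\log(1+t))^{b}$ by the first gives
\[\frac{((1+t)^{\alpha}-1)^{c}}{c!}=\sum_{a}\Bigl(\sum_{b}s(a,b)\,S(b,c)\,\alpha^{b}\Bigr)\frac{t^{a}}{a!},\]
which already exhibits the composite $\sum_{b}s(a,b)S(b,c)$ with the intermediate index $b$ weighted by $\alpha^{b}$. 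Matching $a=k+n-1$, the intermediate index $b=k+n-1-j$, and the lower index $c=k+1$, and tracking the power of $q$ carried by $\alpha^{b}$ together with the normalization, should reproduce exactly the summand $s(k+n-1,k+n-1-j)\,S(k+n-1-j,k+1)\,\mathbb L^{k+j}$.

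To finish, I would extract $[x^{n-1}]$ from the solved generating function. If the functional equation presents $\Phi$ through its inverse, this is a Lagrange--B\"urmann coefficient extraction, and the factor $(1-q)^{n-1}$ emerges from raising the $(1-q)$-scaling in the inverse series to the power $n-1$ dictated by Lagrange inversion; the outer sum over $k$ records the contributing powers $c=k+1$ of $(1+t)^{\alpha}-1$. The main obstacle is twofold: pinning down the exact functional equation and solving it in the clean $(1+t)^{\alpha}-1$ form (including the precise relation $\alpha=\alpha(q)$), and then carrying out the index bookkeeping so that the three arguments and the exponent $k+j$ land exactly as stated. The interchange of the formal sums and the validity of Lagrange inversion take place in $\mathbb{Z}[\mathbb L][[x]]$ and are routine once the shapes match.
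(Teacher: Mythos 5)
This theorem is imported from \cite{AMN}: the present paper gives no proof of it (it only records that \cite{EFMPV25} reproved it combinatorially via the Chow ring of the braid matroid), so there is no in-paper argument to compare yours against. Judged on its own terms, your proposal identifies the right circle of ideas --- the functional equation for the generating function of the classes $[\mbar n]$ coming from \cite{Kee92}, \cite{Get95}, \cite{Man95}, Lagrange--B\"urmann inversion, and the two exponential generating function identities converting powers of $\log(1+t)$ and $e^{u}-1$ into Stirling numbers of the first and second kind --- and the composition identity you write for $((1+t)^{\alpha}-1)^{c}/c!$ is correct and is indeed the mechanism by which the product $s(a,b)S(b,c)$ can arise.

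However, as written the proposal is an outline rather than a proof, and the gaps you flag yourself are exactly where the mathematical content lies. You never state the functional equation, never exhibit its solution, and the assertion that the solution is built from powers of $(1+t)^{\alpha}-1$ with $\alpha$ an explicit function of $q$ is only an expectation (\enquote{the upshot I expect}); establishing that shape, and pinning down $\alpha(q)$, is the heart of the matter. Moreover the exponent bookkeeping does not visibly close: if $\alpha$ is a power of $q$, then $\alpha^{b}$ contributes an exponent linear in $b=k+n-1-j$, whereas the target exponent $k+j=(a-b)+(c-1)$ depends on $a-b$ and on $c$; reconciling the two requires identifying an additional $q$-dependent prefactor produced by the Lagrange--B\"urmann extraction, beyond the visible $(1-q)^{n-1}$, and no candidate for it is given. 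Until the functional equation is written down, solved explicitly, and the $[x^{n-1}]$ coefficient actually extracted with all three Stirling indices and the exponent $k+j$ matched, the formula is not established.
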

Here, $s(n,k)$ and $S(n,k)$ are the Stirling numbers of the first and second kind, respectively.
This formula was reproved by Eur-Ferroni-Matherne-Pagaria-Vecchi \cite{EFMPV25} with a different method.
They made use of the fact that the cohomology ring of $\mbar n$ is the Chow ring of a braid matroid with 
respect to its minimal building set to provide a combinatorial proof of this formula.

For our purposes, we will use the following explicit formula for the betti numbers of $\mbar n$:

\begin{thm}[\cite{AMN}, Corollary 1.5]
    Let, for $i>0$, 
    $$C_{nki}=\frac{(-1)^i(2ki+ni+k+n-1)+k-i}{i(i+1)}-\frac{1}{i(k+1)^i}\sum_{j=0}^{k+n-2}j^i$$

    Then,
    \begin{equation}\label{eq:AMNformula}
        \text{\emph{rk} } H^{2l}({\mbar{n}})=\sum_{k=0}^l \frac{(k+1)^{k+n-1}}{(k+1)!}\sum_{m=0}^{l-k}\frac{1}{m!}\sum_{i_1+\dots+i_m=l-k} C_{nki_1}\dots C_{nki_m}
    \end{equation}
    where the inner sum is over $m$-tuples of positive integers and the empty product is by convention $1$ (in particular the $m=0$ term is always $1$).
\end{thm}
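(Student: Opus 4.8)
The plan is to read off $\rk H^{2l}(\mbar n)$ as the coefficient $[\mathbb{L}^l]\,[\mbar n]$ and to extract it from the closed form for $[\mbar n]$ provided by Theorem~1.1. The first thing I would record is that the innermost triple sum in the target is nothing but a disguised exponential formula: introducing a formal bookkeeping variable $x$, one has
\[
\sum_{m=0}^{l-k}\frac{1}{m!}\sum_{i_1+\dots+i_m=l-k}C_{nki_1}\cdots C_{nki_m}=[x^{l-k}]\exp\!\pa{\sum_{i\ge1}C_{nki}\,x^i}.
\]
Writing $G_{n,k}(x):=\sum_{i\ge1}C_{nki}\,x^i$ and $N:=k+n-1$, the statement therefore reduces to a single family of generating-function identities, one per $k$: that the part of $[\mbar n]$ indexed by a fixed value of the second Stirling parameter contributes to $[\mathbb{L}^l]$ exactly $\frac{(k+1)^{k+n-1}}{(k+1)!}\,[x^{l-k}]\exp(G_{n,k}(x))$. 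In other words, $G_{n,k}$ should be the logarithm of the suitably normalized $k$-th piece of the Theorem~1.1 sum, the power series built from $(1-x)^{n-1}\sum_{j}s(N,N-j)\,S(N-j,k+1)\,x^{j}$.

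Next I would compute $G_{n,k}$ in closed form by splitting $C_{nki}$ into its three natural summands and summing each against $x^i$. The term $-\frac{1}{i(k+1)^i}\sum_{j=0}^{k+n-2}j^i$ sums to $\sum_{j=1}^{N-1}\log\!\pa{1-\tfrac{jx}{k+1}}$, hence exponentiates to the polynomial $\prod_{j=1}^{N-1}\pa{1-\tfrac{jx}{k+1}}$. The crucial bridge is that this product is precisely the ordinary generating function of the Stirling numbers of the first kind:
\[
\prod_{j=1}^{N-1}\pa{1-\tfrac{jx}{k+1}}=\sum_{j}s(N,N-j)\,(k+1)^{-j}\,x^{j},
\]
as one checks by substituting $t=(k+1)/x$ into $\prod_{i=0}^{N-1}(t-i)=\sum_m s(N,m)t^m$. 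Multiplying by the prefactor turns $(k+1)^{-j}\tfrac{(k+1)^{N}}{(k+1)!}$ into $\tfrac{(k+1)^{N-j}}{(k+1)!}$, i.e. exactly the leading value of $S(N-j,k+1)$, so this factor reproduces the first-kind contribution of Theorem~1.1 together with the dominant part of the second-kind factor.

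The two remaining summands of $C_{nki}$ — the alternating piece carrying $(-1)^i$ and the rational piece $\frac{k-i}{i(i+1)}$ — sum to elementary combinations of $\log(1\pm x)$ and rational functions of $x$; a quick check that their constant terms cancel confirms $G_{n,k}(0)=0$, as required for the exponential formula. These are the pieces that must absorb both the prefactor $(1-x)^{n-1}$ and the deviation of $S(N-j,k+1)$ from its leading value $(k+1)^{N-j}/(k+1)!$. A reassuring consistency check is the diagonal term $k=l$: there $l-k=0$ and $[x^0]\exp(G_{n,l})=1$, so it contributes exactly $\frac{(l+1)^{l+n-1}}{(l+1)!}$, which both recovers and refines the asymptotics of Theorem~1.3, with the terms $k<l$ furnishing the lower-order corrections.

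I expect the main obstacle to be the exact matching of the subleading second-kind data and the careful reconciliation of index ranges. Concretely, after the first-kind product is stripped off one must prove that $\exp\!\pa{\sum_i(A_i+B_i)x^i}$, where $A_i+B_i$ denotes the two non-$D$ summands of $C_{nki}$, equals $(1-x)^{n-1}$ times the ratio of $\sum_j s(N,N-j)S(N-j,k+1)x^j$ to $\sum_j s(N,N-j)\tfrac{(k+1)^{N-j}}{(k+1)!}x^j$; this collapses a logarithm of Stirling-of-the-second-kind data into the compact rational-plus-alternating expression defining $C_{nki}$. The delicate points are that the prefactor is $\frac{(k+1)^{k+n-1}}{(k+1)!}$ rather than the true leading constant $S(N,k+1)$ (forcing the second-kind correction entirely into $G_{n,k}$), and that Theorem~1.1 runs over $j>0$ and $k>0$ whereas the target runs over $k\ge0$, so a genuine reorganization is needed to restore the low-degree terms in $\mathbb{L}$. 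Controlling this will rely on the explicit finite expansion $S(M,k+1)=\frac{1}{(k+1)!}\sum_{t}(-1)^t\binom{k+1}{t}(k+1-t)^{M}$ and on Faulhaber's polynomial form for $\sum_{j}j^i$, which is what makes $C_{nki}$ a closed expression in $n,k,i$ in the first place.
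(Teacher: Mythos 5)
First, a caveat: the paper does not prove this statement, it imports it verbatim from \cite{AMN} (Corollary 1.5), so there is no internal proof to compare against and I am judging your plan on its own terms. Your opening moves are sound. With the standard empty-sum convention (the $m=0$ term contributes $1$ only when $l=k$; one checks on $n=4$, $l=1$ that this, and not the paper's literal parenthetical, is the intended reading), the inner double sum is indeed $[x^{l-k}]\exp\pa{\sum_{i\ge 1}C_{nki}x^i}$, and the summand $-\frac{1}{i(k+1)^i}\sum_j j^i$ does exponentiate to $\prod_{j=1}^{N-1}\pa{1-\frac{jx}{k+1}}=\sum_j s(N,N-j)(k+1)^{-j}x^j$ with $N=k+n-1$.

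The gap is the central reduction to one generating-function identity per $k$. Writing $F_k(x)=\frac{(k+1)^{N}}{(k+1)!}\exp(G_{n,k}(x))$, what must be proved is $\sum_{k\ge 0}x^kF_k(x)=[\mbar n]$, but you propose the stronger per-$k$ statement that $x^kF_k(x)$ equals the $k$-th summand of Theorem 1.1 (equivalently, your displayed identity for $\exp\pa{\sum_i(A_i+B_i)x^i}$ as a ratio times $(1-x)^{n-1}$). That statement is false, not merely delicate. Since $G_{n,k}(0)=0$, the constant term of $F_k$ is $\frac{(k+1)^N}{(k+1)!}$, while the $k$-th summand of Theorem 1.1 (even after restoring $j=0$) begins with $S(N,k+1)$; already for $n=4$, $k=1$ this is $8$ versus $S(4,2)=7$. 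In fact for $n=4$, $l=2$ the three per-$k$ contributions of the stated formula are $\frac{1}{2}C_{401}^2+C_{402}=23.5$, $\;8C_{411}=-64$, and $\frac{3^5}{3!}=40.5$, summing to the correct value $0$: the half-integers show that the individual pieces cannot coincide with the integer-coefficient summands of Theorem 1.1, and that essential cross-$k$ cancellation is happening. The same obstruction appears in closed form: summing the two non-power-sum summands of $C_{nki}$ against $x^i$ gives $(k+1)\frac{\log(1-x^2)}{x}-(k+n-1)\log(1+x)-k\log(1-x)$, whose exponential carries the non-rational factor $\exp\pa{(k+1)\log(1-x^2)/x}$ and so cannot equal the rational function $(1-x)^{n-1}P(x)/Q(x)$ you target. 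A correct argument has to generate the sum over $k$ by a global mechanism — e.g.\ Lagrange inversion applied to the functional equation satisfied by the generating function of the $[\mbar n]$, or a reindexing of the inclusion--exclusion expansion of $S(N-j,k+1)$ that mixes different values of $k$ — rather than by matching the $k$-th piece of the claimed formula to the $k$-th piece of Theorem 1.1.
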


Despite the formula above being somewhat cumbersome, 
we can still use it together with some crude estimates to improve
the asymptotic ultra-log concavity results from \cite{ACM}. 
For this, we find a refinement for Theorem \ref{thm:ACMasymptotic} by bounding the remaining terms.
Our main result is the following:

\begin{thm}\label{thm:improved lc}
   There exists an effectively computable constant $N>0$ such that for all integers $n>N$, and $l$ such that $2\le l\le \frac{n}{10\log n}$,
    \[\frac{\rk H^{2(l-2)}({\mbar{n}})}{\binom{n-3}{l-2}}\frac{\rk  H^{2l}({\mbar{n}})}{\binom{n-3}{l}} \le\pa{\frac{\rk H^{2(l-1)}({\mbar{n}})}{\binom{n-3}{l-1}}}^2.\]
\end{thm}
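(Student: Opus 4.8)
The plan is to reduce the ultra-log-concavity inequality to a sufficiently precise asymptotic expansion of $\rk H^{2l}(\mbar n)$, with explicit control of the error term uniform in the stated range $2 \le l \le \frac{n}{10\log n}$. Starting from the explicit formula \eqref{eq:AMNformula}, the dominant contribution to $\rk H^{2l}(\mbar n)$ comes from the $k=l$ term, namely $\frac{(l+1)^{l+n-1}}{(l+1)!}$, which matches Theorem \ref{thm:ACMasymptotic}. The strategy is to write $\rk H^{2l}(\mbar n) = \frac{(l+1)^{l+n-1}}{(l+1)!}\bigl(1 + E_{n,l}\bigr)$ and to bound the relative error $E_{n,l}$ coming from the terms with $k < l$. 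I would first analyze the coefficients $C_{nki}$: the sum $\sum_{j=0}^{k+n-2} j^i$ is of order $(k+n-2)^{i+1}/(i+1)$, so after dividing by $(k+1)^i$ one sees the magnitude of each $C_{nki}$ and hence of the nested multi-index sums $\sum_{i_1+\dots+i_m=l-k} C_{nki_1}\cdots C_{nki_m}$. The key point is to show these lower-$k$ contributions are exponentially smaller than the leading term, since replacing $l$ by $k<l$ costs a factor roughly $\bigl(\frac{k+1}{l+1}\bigr)^{n}$ in the dominant power.

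Once a clean asymptotic of the form $\rk H^{2l}(\mbar n) = \frac{(l+1)^{l+n-1}}{(l+1)!}\bigl(1 + O(\varepsilon_{n,l})\bigr)$ is established, I would substitute it into the three quantities appearing in the inequality. Writing $R_l = \frac{\rk H^{2l}(\mbar n)}{\binom{n-3}{l}}$, the inequality is $R_{l-2}R_l \le R_{l-1}^2$, equivalently $\log R_{l-1} - \log R_{l-2} \ge \log R_l - \log R_{l-1}$, i.e. concavity of $l \mapsto \log R_l$ along consecutive integers. Plugging in the leading asymptotic, the binomial factors contribute $\log \binom{n-3}{l}$ and the main term contributes $(l+n-1)\log(l+1) - \log (l+1)!$; I would compute the discrete second difference $\Delta^2$ of the logarithm of the leading term and show it is strictly negative with a quantitative gap, then argue that this gap dominates the second differences of the error terms $\log(1 + O(\varepsilon_{n,l}))$. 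The binomial coefficients, being log-concave themselves, contribute the correct sign and only help.

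The main obstacle I expect is making the error bound uniform in $l$ up to $\frac{n}{10\log n}$ rather than merely pointwise in fixed $l$ as in Theorem \ref{thm:ACMasymptotic}, and ensuring that the quantitative concavity gap of the leading term genuinely exceeds the accumulated error after taking a second difference. A second difference is a delicate operation: it annihilates the leading smooth behavior and can expose lower-order fluctuations, so I must track the error $\varepsilon_{n,l}$ precisely enough that $\Delta^2 \log(1+O(\varepsilon))$ is provably smaller in absolute value than the negative curvature of the main term. Concretely, I anticipate that the leading term's second difference is of size on the order of $\frac{n}{(l+1)^2}$ or comparable, while the error's second difference must be shown to be $o$ of this; the range $l \le \frac{n}{10\log n}$ is presumably exactly what is needed to force the subleading $k$-terms, whose relative size scales like $\bigl(\frac{l}{l+1}\bigr)^n \approx e^{-n/l}$, to be superpolynomially small. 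The bookkeeping of the multi-index sums in \eqref{eq:AMNformula}, estimating $\sum_m \frac{1}{m!}\sum_{i_1+\cdots+i_m = l-k} \prod C_{nki_j}$ by a generating-function or crude term-by-term bound, is where the constant $10$ and the effectively computable $N$ will emerge.
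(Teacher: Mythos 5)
Your plan is essentially the paper's proof: first establish a uniform refinement of Theorem \ref{thm:ACMasymptotic} in the range $l\le \frac{n}{10\log n}$ by bounding the $C_{nki}$ and the multi-index sums in \eqref{eq:AMNformula}, noting that the $k<l$ terms are suppressed by $\bigl(\tfrac{k+1}{l+1}\bigr)^{n}$ against the polynomial growth $(4Cn^2)^{l-k}$; then plug the asymptotic $\rk H^{2l}(\mbar n)=\frac{(l+1)^{l+n-1}}{(l+1)!}(1+O(1/n^2))$ into the ratio and observe that the main term contributes a concavity gap of size $\frac{n}{l^2}$ which dominates everything else. This is exactly Theorem \ref{thm:refined asymptotic} followed by the paper's ratio computation, so the decomposition, the key lemma, and the size comparisons all coincide.

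One substantive slip: you assert that the binomial normalizations, ``being log-concave themselves, contribute the correct sign and only help.'' The sign is backwards. Since $R_l = \rk H^{2l}(\mbar n)/\binom{n-3}{l}$ has the binomial in the denominator, the second difference satisfies $\Delta^2 \log R_l = \Delta^2\log \rk H^{2l}(\mbar n) - \Delta^2\log\binom{n-3}{l}$, and log-concavity of the binomials makes the subtracted term nonpositive, hence its negation pushes $\Delta^2\log R_l$ \emph{upward}, against the concavity you need --- this is precisely why ultra-log-concavity is a stronger condition than log-concavity. Quantitatively the binomial factor contributes $+\Theta(1/l)$ to $\Delta^2\log R_l$ (the paper records it as $\frac{\binom{n-3}{l-2}\binom{n-3}{l}}{\binom{n-3}{l-1}^2}=\frac{l-1}{l}\cdot\frac{n-l-2}{n-l-1}=\exp(O(1/l))$, a quantity less than $1$), so the argument survives only because the main term's gap $\frac{n}{l^2}=\frac{n}{l}\cdot\frac{1}{l}$ exceeds $\frac{1}{l}$ by the factor $\frac{n}{l}\ge 10\log n\to\infty$. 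If you carry out the second-difference computation you describe, this comparison must be made explicitly rather than waved away as ``only helping''; with that correction your outline closes.
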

To prove this, we need the following result, which is a stronger version of Theorem \ref{thm:ACMasymptotic}:

\begin{thm}\label{thm:refined asymptotic}
    On the asymptotic where $n\to \infty$ with $l\le \frac{n}{10\log n}$,
\[\text{{rk} } H^{2l}({\mbar{n}})=\frac{(l+1)^{l+n-1}}{(l+1)!}\exp\left(O\left (\frac{1}{n^2}\right)\right ).\]

\end{thm}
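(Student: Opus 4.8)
The plan is to start from the explicit formula \eqref{eq:AMNformula} and isolate the dominant $k=l$ term, which is exactly $\frac{(l+1)^{l+n-1}}{(l+1)!}$, then show that all the remaining $k<l$ terms contribute only a multiplicative factor of $\exp(O(1/n^2))$. So I would write
\[
\rk H^{2l}(\mbar n)=\frac{(l+1)^{l+n-1}}{(l+1)!}\left(1+\sum_{k=0}^{l-1}\frac{(k+1)^{k+n-1}/(k+1)!}{(l+1)^{l+n-1}/(l+1)!}\,T_{nk}\right),
\]
where $T_{nk}=\sum_{m=0}^{l-k}\frac{1}{m!}\sum_{i_1+\dots+i_m=l-k}C_{nki_1}\cdots C_{nki_m}$ is the inner double sum, and the goal becomes proving that the full parenthesized sum is $O(1/n^2)$ uniformly in the range $l\le \frac{n}{10\log n}$.

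The key step is controlling the ratio of the leading factors. Writing $r_k=\frac{(k+1)^{k+n-1}/(k+1)!}{(l+1)^{l+n-1}/(l+1)!}$, I would show that $r_k$ decays geometrically as $k$ decreases from $l$: the dominant dependence is $\left(\frac{k+1}{l+1}\right)^{n}$, and since $k+1\le l$ while $l+1$ can be as large as $\sim \frac{n}{10\log n}$, the base $\frac{k+1}{l+1}\le \frac{l}{l+1}=1-\frac{1}{l+1}$ raised to the power $n$ gives roughly $e^{-n/(l+1)}\le e^{-10\log n}=n^{-10}$ for the very top term $k=l-1$. This is the crux: because the range caps $l$ at $\frac{n}{10\log n}$, the exponential gap $e^{-n/(l+1)}$ beats any polynomial factors coming from the $T_{nk}$ and from the Stirling-type corrections, comfortably giving $O(1/n^2)$ with lots of room to spare. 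I would make this precise by bounding $|r_k|\le \exp\!\left(-\frac{n-k-1}{l+1}\right)\cdot(\text{poly in }n,l)$, using $\log(k+1)-\log(l+1)\le -\frac{l-k}{l+1}$ and Stirling for the factorials.

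Next I would bound $|T_{nk}|$ crudely. The inner coefficients $C_{nki}$ are, by their definition, polynomially bounded: the first term is $O(n)$ and the power-sum term $\frac{1}{i(k+1)^i}\sum_{j=0}^{k+n-2}j^i$ is on the order of $\frac{(k+n)^{i+1}}{(i+1)(k+1)^i}$, so $|C_{nki}|$ grows at most like $(Cn)^{\,?}$ in a way I would bound by $(An)^{i}$ for a constant $A$ after dividing by $(k+1)^i$ and using $k+n=O(n)$. Summing over compositions of $l-k$ into $m$ positive parts and over $m$, the whole $T_{nk}$ is bounded by $\exp(O(l\log n))=n^{O(l)}$, again a subexponential (in $n$) quantity that is crushed by the factor $e^{-n/(l+1)}$. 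I would organize this as a geometric-series bound $\sum_{k<l} r_k T_{nk}\le \sum_{k<l} n^{O(l)} e^{-(l-k)n/(l+1)}$, which is dominated by its largest term at $k=l-1$ and is therefore $O(n^{-8})=O(1/n^2)$.

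The main obstacle I anticipate is making the estimate of $C_{nki}$ and of the composition sum $T_{nk}$ \emph{uniform} in the triple $(n,k,i)$ over the whole range, rather than for fixed $l$ as in Theorem \ref{thm:ACMasymptotic}; in particular I need the number of compositions of $l-k$, namely $\binom{l-k-1}{m-1}$, together with the $1/m!$ and the size of each $C_{nki}$, to combine into a bound that is genuinely subexponential in $n$. This is where the cap $l\le\frac{n}{10\log n}$ is essential: it guarantees $l\log n\le \frac{n}{10}$, so $n^{O(l)}=e^{O(l\log n)}=e^{O(n/10)}$ is still beaten by $e^{-n/(l+1)}$ only because the constant in the latter is forced large by the $\log n$ in the denominator of $l$. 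I would need to track the constants carefully to confirm the net exponent is negative with enough margin to absorb the polynomial prefactors and yield the clean $\exp(O(1/n^2))$ error, and this bookkeeping — not any single clever idea — is the real work of the proof.
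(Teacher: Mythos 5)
Your overall architecture is the same as the paper's: isolate the $k=l$ term of \eqref{eq:AMNformula} as the main term, bound each $C_{nki}$ by a power of $n$, count compositions of $l-k$, and play the ratio $r_k\le\left(\frac{k+1}{l+1}\right)^{n-O(1)}e^{O(l-k)}\le e^{-(l-k)n/(l+1)+O(l-k)}$ against the growth of the inner sums. However, there is a genuine quantitative error at the point where you close the estimate. You bound $T_{nk}$ by $n^{O(l)}=e^{O(l\log n)}$ and assert that this is ``crushed'' by $e^{-n/(l+1)}$; but in the range $l\le\frac{n}{10\log n}$ the factor $e^{-n/(l+1)}$ is only about $n^{-10}$ --- polynomially small --- while $e^{O(l\log n)}$ can be as large as $e^{cn/10}$, which is exponentially large. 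The term of your ``geometric-series bound'' at $k=l-1$ is then $n^{O(l)}\cdot n^{-10+o(1)}$, which is nowhere near the claimed $O(n^{-8})$, so the argument as written does not conclude. (Your closing remark that the constants need careful tracking does not repair this: with the bound $n^{O(l)}$ the net exponent is genuinely positive.)

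The fix --- and the point the paper's proof turns on --- is that the exponent in the bound for $T_{nk}$ must be proportional to $l-k$, not to $l$: since the compositions are of $l-k$ and each $|C_{nki}|\le Cn^{i+1}$, one gets $|T_{nk}|\le(l-k+1)\left(4Cn^2\right)^{l-k}$. Then each unit of $l-k$ contributes $+\log(4Cn^2)\approx 2\log n$ to the exponent while the ratio $r_k$ contributes $-n/(l+1)\le-10\log n\,(1+o(1))$ per unit of $l-k$; the per-unit exponent is therefore negative, the maximum over $k$ is attained at $k=l-1$, and the total error is at most $\mathrm{poly}(l)\cdot n^{2-10+o(1)}=O(n^{-2})$ after absorbing the $O(n^{4.5})$ polynomial prefactors. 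Your bound $(An)^{i}$ on $C_{nki}$ and the composition count $\binom{l-k-1}{m-1}\le 4^{l-k}$ are exactly the right ingredients for this; it is only the final aggregation, where $l-k$ was replaced by $l$ in the exponent, that breaks the proof. Note also that ``subexponential in $n$'' is not a sufficient property for $T_{n,l-1}$ here: anything larger than a fixed power of $n$ (independent of $l$) would already defeat the only available saving, $e^{-n/(l+1)}\approx n^{-10}$.
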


Here we are using Landau's big-oh notation in the sense that
\[
f(x_1,x_2,\dots,x_m) = O(g(x_1,\dots,x_m))
\]
if there exist $C,N$ effectively computable such that $f \le Cg$ for all $(x_1,\dots,x_m)$ in the domain of $f$ (which is contained in the domain of $g$) with $\max x_i \ge N$.

Improving the range of $l$ for which Theorem \ref{thm:improved lc} holds to $l\le 1+n/2$ would show that for $n$ sufficiently large,
the Grothendieck classes of ${\mbar{n}}$ are ultra-log-concave, which would be evidence in favor of the conjecture of Aluffi-Chen-Marcolli \cite{ACM} that these classes are real-rooted.

\textit{Acknowledgments.} The author gratefully acknowledges support from Caltech's Summer Undergraduate Research Fellowship, made possible by a generous gift from Marcella Bonsal.

\section{Proofs}

\begin{proof}[Proof of Theorem \ref{thm:refined asymptotic}]

    By the integral bound $$\sum_{j=0}^{k+n-2} j^i \le \int_0^{k+n-1}x^i dx = \frac{(k+n-1)^{i+1}}{i+1}, $$
    we get that 
    \[|C_{nki}| \le \left|\frac{(-1)^i(2ki+ni+k+n-1)+k-i}{i(i+1)}\right|+\frac{(k+n-1)^{i+1}}{i(i+1)(k+1)^i}\]
    \[\le O(n)+\frac{(2n)^{i+1}}{i(i+1)(k+1)^i}\]
    Hence, 
    \[ C_{nki} = O\left (n^{i+1}\right )\]
    So we get a constant $C$ (which we assume without loss of generality is greater than one) such that 
    \[|C_{nki}|\le Cn^{i+1}\]
    Hence, for any $m,l,$
    \[
    \left\lvert\sum_{i_1+\dots+i_m=l-k} C_{nki_1}\dots C_{nki_m}\right\rvert
    \le \sum_{i_1+\dots+i_m=l-k} C^m n^{(i_1+1)+\cdots+(i_m+1)}.
    \]
    Note that the term inside the sum does not actually depend on the $i_j$'s.
    The number of $m$-tuples of positive integers with sum $l-k$ is
    $$\binom{l-k-1}{m-1}\le 2^{l-k+m-1}\le 4^{l-k},$$

    and so we conclude that
    \[
    \left\lvert\sum_{i_1+\dots+i_m=l-k} C_{nki_1}\dots C_{nki_m}\right\rvert
    \le C^m n^{l-k+m}4^{l-k}.
    \]
    Now, let's bound the sum over $m$ by the number of terms times the largest term.
    Since $m\le l-k$ and $m!\ge 1$,
    \[
    \left\lvert\sum_{m=0}^{l-k}\frac{1}{m!}\sum_{i_1+\dots+i_m=l-k} C_{nki_1}\dots C_{nki_m}\right\rvert
    \le (l-k+1)\left(4Cn^2 \right)^{l-k}.
    \]
    We are now ready to bound the desired error term, once again using a maximum times number of terms bound:
\begin{align*}
    & \left\lvert \rk H^{2l}(\mbar n) - \frac{(l+1)^{l+n-1}}{(l+1)!} \right\rvert= \\
    &\qquad = \left\lvert 
        \sum_{k=0}^{l-1} \frac{(k+1)^{k+n-1}}{(k+1)!}
        \sum_{m=0}^{l-k} \frac{1}{m!} 
        \sum_{\substack{i_1+\dots+i_m = l-k \\ i_j \ge 0}} 
        C_{nki_1} \dots C_{nki_m} 
    \right\rvert
\end{align*}
\[\le l \cdot \max_{0\le k \le l-1} \frac{(k+1)^{k+n-1}}{(k+1)!} \cdot \left ((l-k+1)\left(4Cn^2\right)^{l-k}\right ). \]

We take outside the maximum terms which won't matter to the final estimates:
\[\le l(l+1)\max_{0\le k \le l-1} \frac{(k+1)^{k+n-1}}{(k+1)!} \left(4Cn^2\right)^{l-k}.\]

We multiply and divide by the claimed main term:
\[\le \frac{(l+1)^{l+n-1}}{(l+1)!} \cdot l(l+1)  \max_{0\le k \le l-1} \frac{(k+1)^{k+n-1}}{(k+1)!}\frac{(l+1)!}{(l+1)^{l+n-1}} \left(4Cn^2\right)^{l-k}.\]

Now using Robbins' explicit version of Stirling's formula \cite{Rob55}:

\[\frac{(k+1)^{k+n-1}}{(k+1)!}\frac{(l+1)!}{(l+1)^{l+n-1}} \le\]
\[\le  \dfrac{(k+1)^{k+n-1}}{\sqrt{2\pi (k+1)}\left(\frac{k+1}{e}\right)^{k+1}e^{\frac{1}{12(k+1)+1}}}\frac{\sqrt{2\pi (l+1)}\left(\frac{l+1}{e}\right)^{l+1}e^{\frac{1}{12(l+1)}}}{(l+1)^{l+n-1}}\]
Joining terms, this is
\[= \frac{e^{l+1}}{e^{k+1}}\frac{(k+1)^{n-2}}{(l+1)^{n-2}}\frac{\sqrt{l+1}}{\sqrt{k+1}}e^{\frac{1}{12(l+1)}-\frac{1}{12(k+1)+1}}\]
\[= \left(\frac{k+1}{l+1}\right)^{n-2.5}e^{l-k+\frac{1}{12(l+1)}-\frac{1}{12(k+1)+1}}\]
\[\le \left(\frac{k+1}{l+1}\right)^{n-2.5}e^{l-k}\]

With this, we bound the error term by
\[\le \frac{(l+1)^{l+n-1}}{(l+1)!} \cdot l(l+1) \max_{0\le k \le l-1} \left(\frac{k+1}{l+1}\right)^{n-2.5} \left(4eCn^2\right)^{l-k}.\]
Taking some terms out of the maximum again, 
\[\le \frac{(l+1)^{l+n-1}}{(l+1)!} \cdot l(l+1)  (l+1)^{2.5} \max_{0\le k \le l-1} \left(\frac{k+1}{l+1}\right)^{n} \left(4eCn^2\right)^{l-k}\]

But now note that
\[\left ( \frac{k+1}{l+1} \right )^n \left(4eCn^2\right)^{l-k}  = \left ( 1- \frac{l-k}{l+1} \right )^n \left(4eCn^2\right)^{l-k}\]
\[\le \exp\left(-(l-k)\frac{n}{l+1} + (l-k)\log(4eCn^2)\right).\]
For $l \le \frac{n}{10\log(n)}$ (and $n$ sufficiently large), this is increasing in $k$, so its maximum is attained at $k=l-1$ and is at most
\[\exp\left (-\frac{n}{l+1}+\log(4eCn^2)\right ).\]
Hence,

\[\left\lvert \rk H^{2l}(\mbar n) - \frac{(l+1)^{l+n-1}}{(l+1)!} \right\rvert \le \]
\[\le \frac{(l+1)^{l+n-1}}{(l+1)!} \cdot l(l+1)  (l+1)^{2.5} \exp\left (-\frac{n}{l+1}+\log(4eCn^2)\right )\]
\[\le \frac{(l+1)^{l+n-1}}{(l+1)!}\exp\left(-\frac{n}{l+1}+6.5\log n + O(1)\right)\]
Substituting our inequality $l \le \frac{n}{10\log(n)}$, we get
\[\le \frac{(l+1)^{l+n-1}}{(l+1)!}\exp\left(-2\log n + O(1)\right)\]
\[\le \frac{(l+1)^{l+n-1}}{(l+1)!}O\left(\frac{1}{n^2}\right)\]

This gives us 
\[\text{{rk} } H^{2l}({\mbar{n}})=\frac{(l+1)^{l+n-1}}{(l+1)!}\left(1+O\left(\frac{1}{n^2}\right)\right)=\frac{(l+1)^{l+n-1}}{(l+1)!}\exp\left(O\left (\frac{1}{n^2}\right)\right)\]
\end{proof}

\begin{proof}[Proof of Theorem \ref{thm:improved lc}]
    
By Theorem \ref{thm:refined asymptotic}, we have that
\[\frac{(\text{\emph{rk} } H^{2(l-1)}({\mbar{n}}))^2}{\text{\emph{rk} } H^{2(l-2)}({\mbar{n}})\text{\emph{rk} } H^{2l}({\mbar{n}})}=\frac{\frac{l^{2l+2n-4}}{l!^2}}{\frac{(l+1)^{l+n-1}}{(l+1)!}\frac{(l-1)^{l+n-3}}{(l-1)!}}\exp\left(O\left(\frac{1}{n^2}\right)\right)\]
\[=\frac{(l+1)!(l-1)!}{l!^2}\frac{l-1}{l+1}\frac{(l^2)^{l+n-2}}{(l+1)^{l+n-2}(l-1)^{l+n-2}}\exp\left (O\left(\frac{1}{n^2}\right)\right )\]

\[=\frac{l-1}{l} \left ( \frac{l^2}{l^2-1} \right )^{l+n-2}  \exp\left (O\left(\frac{1}{n^2}\right)\right ).\]

By the inequality $1+x\le e^x$, applied to $x=-\frac{1}{l^2}$, we get that

\[\left ( \frac{l^2}{l^2-1} \right )^{l+n-2} = \left ( \frac{1}{1-\frac{1}{l^2}} \right )^{l+n-2} \ge \left ( e^{\frac{1}{l^2}} \right )^{l+n-2} = \exp\left(\frac{l+n-2}{l^2}\right).\]
By the same inequality, applied to $x=\frac{1}{l-1}$ (and some rearranging), we get that
\[\frac{l-1}{l} \ge e^{-\frac{1}{l-1}}.\]
Hence, 
\[\frac{l-1}{l} \left ( \frac{l^2}{l^2-1} \right )^{l+n-2}  \exp\left (O\left(\frac{1}{n^2}\right)\right ) \ge \]
\[\ge \exp\left(-\frac{1}{l-1} + \frac{l+n-2}{l^2} + O\left(\frac{1}{n^2}\right)\right)=\]
\[= \exp\left(-\frac{1}{l(l-1)}-\frac{2}{l^2} + \frac{n}{l^2} + O\left(\frac{1}{n^2}\right)\right)\]
\[= \exp\left(\frac{n}{l^2} + O\left(\frac{1}{l^2}\right)\right).\]

But note also that 

\[\frac{\binom{n-3}{l-2}\binom{n-3}{l}}{\binom{n-3}{l-1}^2} = \frac{l-1}{l}\frac{n-l-2}{n-l-1} \]
\[=\left(1-\frac{1}{l}\right)\left(1-\frac{1}{n-l-1}\right) = \exp\left (O\pa{\frac{1}{l}}\right )\]
because $l\le \frac{n}{10\log n}$ and so $n-l \ge n/2$ for $n$ large.
Hence,
\[\frac{(\rk H^{2(l-1)}({\mbar{n}})/\binom{n-3}{l-1})^2}{\rk H^{2(l-2)}({\mbar{n}})\rk H^{2l}({\mbar{n}})/\binom{n-3}{l-2}\binom{n-3}{l}} \ge
 \exp\left(\frac{n}{l^2} + O\left(\frac{1}{l}\right)\right)>1\]
for $n$ sufficiently large.
\end{proof}

\defbibheading{bibliography}{%
    \section{Bibliography}}
\printbibliography

\end{document}